\newtheorem{thm}{Theorem}[section]
\newtheorem{cor}[thm]{Corollary}
\newtheorem{lemma}[thm]{Lemma}
\newtheorem{prop}[thm]{Proposition}
\theoremstyle{definition}
\newtheorem{defn}[thm]{Definition}
\theoremstyle{remark}
\newtheorem{remark}[thm]{Remark}
\newcommand{\pb}[1]{\left\{#1\right\}}
\newcommand{\pp}[2]{\frac{\partial#1}{\partial#2}}
\newcommand{\we}{\wedge}
\newcommand{\F}{{\bf F}}
\newcommand{\G}{{\bf G}}
\newcommand{\X}{\mathcal X}
\newcommand{\D}{\mathcal D}
\newcommand{\R}{\mathbf{R}}
\newcommand{\diff}{{\rm d }}
\newif\ifprivate
 \numberwithin{equation}{section}
\def\???{\ifprivate {\bf {???}} \marginpar{{\Huge {\bf ?}}}\else \fi}
\numberwithin{equation}{section}
\title{Coupling symmetries with Poisson structures}
\author{Camille Laurent-Gengoux} \address{Camille Laurent Gengoux, Laboratoire et D\'{e}partement de Math\'{e}matiques
  UMR 7122,
  Université de Metz et CNRS, France}
\author{Eva Miranda}\address{ Eva Miranda,
Departament de Matem\`{a}tica Aplicada I, EPSEB, Universitat Polit\`{e}cnica de Catalunya, Barcelona, Spain, \it{e-mail: eva.miranda@upc.edu}}
 \thanks{Eva Miranda has been partially supported by the DGICYT/FEDER project MTM2009-07594: Estructuras Geometricas: Deformaciones, Singularidades y Geometria Integral until December 2012. Her research will be partially supported by the project GEOMETRIA ALGEBRAICA, SIMPLECTICA, ARITMETICA Y APLICACIONES with reference: MTM2012-38122-C03-01  starting in January 2013.}
\begin{document}
\maketitle

\date{\today}

\begin{abstract}
We study local normal forms for completely integrable systems on Poisson manifolds in the presence of additional symmetries. The symmetries that we consider are encoded in actions of compact Lie
groups. The existence of Weinstein's splitting theorem for the integrable system is also
studied giving some examples in which such a splitting does not exist, i.e. when the integrable system is not, locally,
a product of an integrable system on the symplectic leaf and an integrable system on a transversal. The
problem of splitting for integrable systems with additional symmetries is also considered.
\end{abstract}

\section{Introduction}

Integrable Hamiltonian systems have been widely studied in the context of symplectic manifolds.
The existence of action-angle coordinates (semilocal and global) under some additional conditions have
become a main goal in this area. For an extensive study of the existence of action-angle coordinates in symplectic manifolds,
we refer to Arnold \cite{arnold} and Duistermaat \cite{Duistermaat}.
Several extensions of the concept of complete integrability have been done in the symplectic context. We refer to Dazord and Delzant \cite{dazorddelzant}
and Nekhoroshev \cite{Nekhroshev} for more
details. However, the most natural framework for several dynamical systems is the framework of Poisson manifolds.
For instance, the Gelfand-Cetlin system, whose underlying Poisson structure is the
dual of a Lie algebra \cite{guilleminandsternberg}, arises from a non-symplectic Poisson structure.

In \cite{camillepoleva}, we proved an action-angle theorem for completely integrable systems within the Poisson context.
Indeed, we proved an action-angle theorem for non-commutative integrable systems,
non-commutative integrable systems being systems for which there are more constants of motion than required in order to prove Liouville integrability.
An important point is that a non-commutative
integrable systems may be regular even at singular points of the Poisson structure.

The construction of these \emph{action-angle} coordinates goes through the construction of a natural Hamiltonian $\mathbb T^n$ action
tangent to the fibers of the moment map. So, as it happens in the
symplectic case, we can naturally let a compact Abelian group act in the integrable system.
In \cite{camillepoleva}, however, we did not address what happens when there are additional symmetries
encoded in actions of compact Lie groups. As a first step in that direction, we prove a local equivariant
Carath\'{e}odory-Jacobi-Lie theorem \footnote{We could also call this theorem equivariant Carath\'{e}odory-Weinstein theorem
as its equivalent for symplectic forms is often called the
Darboux-Carath\'{e}odory theorem. But we prefer to stick to the old denomination Carath\'{e}odory-Jacobi-Lie
since Lie already worked partial aspects of this result in (see  Satz 3 on p. 198 in
\cite{Lie}).}   for the Poisson structure. The existence of an equivariant analogue of the action-angle
theorem for non-commutative integrable systems will be considered in a future work.

Another interesting problem in the Poisson context is that of splitting of completely integrable systems.
Weinstein's celebrated splitting theorem \cite{weinstein} asserts that locally a Poisson manifold is a
direct product of a symplectic manifold and a Poisson manifold of rank-$0$ (transversal Poisson structures).
One can attempt to apply a similar strategy taking into account
additional symmetries. Nguyen Tien Zung and the second author of this paper considered this problem in \cite{mirandazung2006}
where an equivariant splitting theorem was proved under some mild
assumptions. Still the problem of studying the possibility of splitting, not only the Poisson structure, but an integrable system and
a Poisson structure simultaneously was not considered before. We also consider the problem of splitting the integrable
system and the Poisson structure taking into account additional symmetries (equivariant splitting theorem for completely integrable systems).
We do not claim to have settled the question in the present note, but we claim to have at least clarified some elementary and less
elementary facts that shall be the basis of a more general study.

{\bf{Organization of this paper:}}

In Section 2, we obtain an equivariant Carath\'{e}odory-Jacobi-Lie theorem for Poisson structures which can be seen as a variant of Weinstein's splitting theorem \cite{weinstein} in which normal forms for a set of functions are also taken into account.

In Section 3, we consider the problem of splitting completely integrable systems. In particular, we give some counterexamples that make the question more precise.
In \cite{camilleeva}, we will present the obstruction of integrability in terms of foliated Poisson cohomology.

In Section 4, we combine results and techniques in Section 2 and Section 3 to study equivariant normal forms for regular completely integrable systems which are split.

{\bf{Acknowledgements:}}

The second author of this paper is thankful to the Hanoi National University of Education for their warm hospitality during her visit in occasion of the conference GEDYTO that she co-organized. She is
particularly thankful to Professor Do Duc Thai.
 She also wants to thank the ESF network \emph{Contact and Symplectic Topology (CAST)} for providing financial support to organize this conference.
The collaboration that led to this paper started longtime ago. The authors acknowledge financial support from the Marie Curie postdoctoral  EIF project GEASSIS FP6-MOBILITY24513.

\section{Equivariant Carath\'{e}odory-Jacobi-Lie theorem for Poisson manifolds}

We recall the definitions of a (Liouville) completely integrable system and of an involutive family on a Poisson manifold. We refer to \cite{LPV} and \cite{camillepoleva} for a more detailed introduction to that matter.

\begin{defn}
Let $(M,\Pi=\{.,.\})$ be a Poisson manifold of rank $2r$ and of dimension $n$. An $s$-tuplet of functions
  ${\F}=(f_1,\dots,f_s)$ on $M$ is said to define a \emph{(Liouville) integrable system} on $(M,\Pi)$
  if
\begin{enumerate}
    \item the functions $f_1,\dots,f_s$ are independent,
    \item for all $i,j=1, \dots,s$, the functions $f_i$ and $f_j$ are in involution, i.e. $\{f_i,f_j\}=0$,
    \item $r+s=n$.
  \end{enumerate}
When only items (1) and (2) hold true, we shall speak of a \emph{involutive family}.
\end{defn}

The map ${\F}:M\to\R^s$ is called the {\emph{momentum map}} of $(M,\Pi,\F)$. For the moment, we leave aside Liouville completely integrable systems,
and start by giving equivariant normal forms when only the first and second items in the previous definition are satisfied, but, a priori, not the third one.
The following theorem is the classical Carath\'{e}odory-Jacobi-Lie theorem, but stated within the context of Poisson manifolds, as stated in \cite{camillepoleva}.

\begin{thm} \label{thm:localsplitting}
  Let $(M,\Pi)$ be a Poisson manifold of dimension $n=2r+s$. Let $p_1,\dots,p_r$ an involutive family,
  defined on a neighborhood of some given point $m \in M$, which vanish at $m$ and whose Hamiltonian vector fields
  are linearly independent at $m$. There exist a neighborhood $U$ on $m$ and functions $q_1,\dots,q_r,z_1,\dots,z_{s}$ on $U$,
  such that
  \begin{enumerate}
    \item the $n$ functions $(p_1,q_1,\dots,p_r,q_r, z_1,\dots,z_{s})$ form a system of local coordinates, centered
    at $m$;
    \item the Poisson structure $\Pi$ is given in these coordinates by
    \begin{equation}\label{eq:thm_split}
      \Pi=\sum_{i=1}^r\pp{}{q_i}\we\pp{}{p_i}+\sum_{i,j=1}^s g_{ij}(z)\pp{}{z_i}\we\pp{}{z_j},
    \end{equation}
    where each function $g_{ij}(z)$ is a smooth function and is independent from $p_1,\dots,p_r, q_1,\dots,q_r$.
  \end{enumerate}
  The rank of $\Pi$ at $m$ is $2r$ if and only if all the matrix $(g_{ij}(z))_{i,j=1}^s$ vanishes for $z=0$.
\end{thm}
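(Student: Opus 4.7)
My plan is to prove the theorem by induction on $r$, the half-rank of the involutive family. The base case $r=0$ is vacuous: any local chart around $m$ expresses $\Pi$ as $\sum g_{ij}(z)\,\p_{z_i}\we\p_{z_j}$, and the rank claim becomes the definition of the rank. For the inductive step, the strategy is to peel off one symplectic pair $(p_1,q_1)$ at a time, reduce $\Pi$ to a direct sum of the canonical bracket in $(p_1,q_1)$ and a transverse Poisson structure, and then feed the remaining involutive family $p_2,\dots,p_r$ to the induction hypothesis.

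The first step is to construct $q_1$ satisfying $\{p_i,q_1\}=\delta_{i,1}$. Since the $p_i$ pairwise Poisson commute, the Hamiltonian vector fields $X_{p_1},\dots,X_{p_r}$ also pairwise commute, as $[X_{p_i},X_{p_j}]=-X_{\{p_i,p_j\}}=0$; they are linearly independent at $m$ by hypothesis. By the simultaneous flow-box theorem for commuting vector fields, I can find local coordinates $(y_1,\dots,y_n)$ centered at $m$ in which $X_{p_i}=\p/\p y_i$ for $i=1,\dots,r$. Setting $q_1=y_1$ gives $X_{p_i}(q_1)=\delta_{i,1}$, as required.

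Next I exploit that $\{p_1,q_1\}=1$ is constant, so $X_{p_1}$ and $X_{q_1}$ commute and are linearly independent near $m$ (each acts nontrivially on the other's defining function). Choose a codimension-$2$ transversal $S=\{p_1=q_1=0\}$ through $m$ with local coordinates $(w_1,\dots,w_{n-2})$, and extend these $w_j$ to a neighborhood of $m$ as invariants of the joint flow of $X_{p_1}$ and $X_{q_1}$; together with $p_1,q_1$ they form a local coordinate system in which $X_{p_1}=\p/\p q_1$ and $X_{q_1}=-\p/\p p_1$. By construction $\{p_1,w_j\}=X_{p_1}(w_j)=0$ and $\{q_1,w_j\}=X_{q_1}(w_j)=0$, so in these coordinates
\begin{equation*}
\Pi=\pp{}{q_1}\we\pp{}{p_1}+\sum_{i<j}\pi_{ij}(p_1,q_1,w)\pp{}{w_i}\we\pp{}{w_j}.
\end{equation*}
Applying the Jacobi identity to $\{p_1,\{w_i,w_j\}\}$ and to $\{q_1,\{w_i,w_j\}\}$ and using the previous brackets yields $\p\pi_{ij}/\p q_1=\p\pi_{ij}/\p p_1=0$, so the transverse bivector $\Pi'=\sum \pi_{ij}(w)\,\p_{w_i}\we\p_{w_j}$ is a genuine Poisson structure on $S$. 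In the same way, $\{p_1,p_k\}=\{q_1,p_k\}=0$ for $k\geqs 2$ forces $p_2,\dots,p_r$ to depend only on the $w$-coordinates; they thus define an involutive family on $(S,\Pi')$, still independent, and their $\Pi'$-Hamiltonian vector fields are linearly independent at $m$ (otherwise one would produce a linear dependence among $X_{p_1},\dots,X_{p_r}$).

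The induction hypothesis applied to $(S,\Pi')$ with $p_2,\dots,p_r$ provides $q_2,\dots,q_r,z_1,\dots,z_s$ bringing $\Pi'$ to the stated form, and combining with the block $\p_{q_1}\we\p_{p_1}$ gives \eqref{eq:thm_split}. The rank statement follows immediately: the Darboux block contributes $2r$ to $\mathrm{rk}\,\Pi(m)$, and the remaining contribution equals the rank of the matrix $(g_{ij}(0))$. The only delicate point in the argument is ensuring that the transverse bivector really is independent of $p_1,q_1$, which is where Jacobi is used crucially; the construction of $q_1$ and the flow-box coordinates are otherwise routine once the commutation of the relevant Hamiltonian vector fields is observed.
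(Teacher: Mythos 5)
Your proposal is correct and follows essentially the same route as the paper: induction on $r$, construction of the conjugate coordinate $q_1$ via a flow-box (Frobenius) argument for the commuting Hamiltonian vector fields, straightening of the pair $X_{p_1},X_{q_1}$ to split off a Darboux block, the Jacobi identity to show the transverse coefficients are independent of $p_1,q_1$, and the observation that $p_2,\dots,p_r$ descend to the transversal before invoking the induction hypothesis. The only differences (peeling off the index $1$ rather than $r$, and realizing the transverse coordinates as joint flow invariants rather than by a second explicit application of Frobenius) are cosmetic.
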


In this section we prove an equivariant version of Theorem \ref{thm:localsplitting}. Our proof follows roughly the lines of \cite{camillepoleva}.

\begin{thm}\label{thm:equivariantnormalform1}
   Let $(M,\Pi)$ be a Poisson manifold of dimension $n=2r+s$, acted upon by a compact Lie group $G$,
whose action preserves the Poisson structure. Let $p_1,\dots,p_r$ be an involutive family of $G$-invariant functions, defined on a neighborhood of  a fixed point $m$ of the $\G$-action, whose
Hamiltonian vector fields are linearly independent at $m$. There exist a neighborhood $U$ of $m$ and functions $q_1,\dots,q_r,z_1,\dots,z_{s}$ on $U$,
  such that
  \begin{enumerate}
    \item the $n$ functions $(p_1,q_1,\dots,p_r,q_r, z_1,\dots,z_{s})$
    form a system of local coordinates, centered at $m$;
    \item the Poisson structure $\Pi$ is given in these coordinates by
    \begin{equation}\label{eq:thm_split2prime}
      \Pi=\sum_{i=1}^r\pp{}{q_i}\we\pp{}{p_i}+\sum_{i,j=1}^s g_{ij}(z)\pp{}{z_i}\we\pp{}{z_j},
    \end{equation}
    where each function $g_{ij}(z)$ is a smooth function and is
    independent from $p_1,\dots,p_r, q_1,\dots,q_r$.
    \item  There exists a group homomorphism from $G$ to the group of $s \times s$
    invertible matrices, denoted by $g \to A(g)$, so that the $G$-action on
    $U$ is given, in the previous coordinates, for all $g \in G$, by
 $$ \rho \big(g  , (p_1,q_1,\dots,p_r,q_r, z_1,\dots,z_{s} ) \big) = \\
 ( p_1,q_1,\dots,p_r,q_r,\sum_{i=1}^s  m_{i,1} (g) z_i,\dots,\sum_{i=1}^s  m_{i,s} (g) z_i ) .$$
\end{enumerate}
 The rank of $\Pi$ at $m$ is $2r$ if and only if all the matrix $(g_{ij}(z))_{i,j=1}^s$ vanishes for $z=0$.
\end{thm}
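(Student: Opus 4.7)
The plan is to mimic the non-equivariant construction of Theorem~\ref{thm:localsplitting} as carried out in \cite{camillepoleva}, while replacing every non-canonical choice by a $G$-equivariant one; the compactness of $G$ and the fact that $m$ is fixed by $G$ make this possible, in particular through the existence of a $G$-invariant Riemannian metric on a neighbourhood of $m$.

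First I would construct the $G$-invariant functions $q_1,\dots,q_r$. The Hamiltonian vector fields $X_{p_i}$ are $G$-invariant (since each $p_i$ and $\Pi$ are) and pairwise commute; the subspace $V=\mathrm{span}(X_{p_1}(m),\dots,X_{p_r}(m))$ of $T_mM$ is pointwise fixed by the linearised $G$-action. Its orthogonal complement relative to the invariant metric is then a $G$-stable transversal; exponentiating it yields a $G$-invariant submanifold $\Sigma$ of codimension $r$ through $m$. Parametrising a neighbourhood of $m$ by $(t_1,\dots,t_r,y)\mapsto \Phi^{p_1}_{t_1}\circ\cdots\circ\Phi^{p_r}_{t_r}(y)$ with $y\in\Sigma$ and setting $q_i:=t_i$ yields $G$-invariant functions satisfying $\{p_i,q_j\}=\delta_{ij}$. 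The identity $\{q_i,q_j\}=0$ is not automatic, but the further correction used in the non-equivariant proof of \cite{camillepoleva} (recursively straightening $\Sigma$ along the flows of the $X_{q_i}$'s) goes through $G$-equivariantly, because each correcting flow is generated by an already-invariant function and every transversal involved can be chosen invariant.

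Next I would produce the transverse coordinates $z_1,\dots,z_s$. The submanifold $N=\{p_1=q_1=\cdots=p_r=q_r=0\}$ is $G$-invariant and contains $m$ as a $G$-fixed point, so Bochner's linearisation theorem supplies coordinates $(z_1,\dots,z_s)$ on $N$ in which $G$ acts through a linear representation $A:G\to GL(s,\R)$; this is the group homomorphism demanded in item~(3). I extend each $z_j$ to a neighbourhood $U$ of $m$ by declaring it to be constant along the joint flows of all the $X_{p_i}$ and $X_{q_i}$. Because these vector fields are $G$-invariant, their flows commute with the $G$-action, so the extensions still satisfy $g^{*}z_j=\sum_{i}m_{i,j}(g)\,z_i$, while the $p_i$'s and $q_i$'s remain $G$-invariant. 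By construction $\{p_i,z_j\}=\{q_i,z_j\}=0$, and the Jacobi identity then forces $\{z_i,z_j\}$ to be annihilated by every $X_{p_k}$ and $X_{q_k}$; consequently $g_{ij}(z):=\{z_i,z_j\}$ depends only on $(z_1,\dots,z_s)$, giving \eqref{eq:thm_split2prime}. The rank statement at the end of the theorem is then read off directly from the resulting expression for $\Pi$.

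The step I expect to be the most delicate is the construction of the $q_i$'s: naive averaging of non-invariant $q_i$'s over $G$ preserves $\{p_i,q_j\}=\delta_{ij}$ (since the $p_i$'s are $G$-invariant) but generically destroys $\{q_i,q_j\}=0$, so one cannot produce equivariant $q_i$'s simply by symmetrising those of \cite{camillepoleva} a posteriori. The way out, and the organising principle of the proof, is that every geometric ingredient entering the non-equivariant Carath\'eodory-Jacobi-Lie construction---the transversal, the commuting Hamiltonian flows, and the Bochner coordinates on the transversal---can be chosen $G$-equivariant from the start, so $G$-invariance is never imposed after the fact.
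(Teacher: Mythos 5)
Your organising principle (make every choice $G$-equivariant from the start) is sound, and your treatment of the transverse coordinates is essentially correct: Bochner on the invariant transversal $N$, extension of the $z_j$ by the commuting flows of the $X_{p_i}$, $X_{q_i}$, and the Jacobi identity to kill the dependence of $\{z_i,z_j\}$ on $(p,q)$. But the crux of the theorem --- producing $G$-invariant $q_1,\dots,q_r$ satisfying \emph{both} $\{p_i,q_j\}=\delta_{ij}$ and $\{q_i,q_j\}=0$ --- is exactly the step you leave unproven. You correctly observe that a posteriori Haar-averaging of all the $q_i$'s ruins $\{q_i,q_j\}=0$, but your proposed remedy ("the recursive straightening of $\Sigma$ along the flows of the $X_{q_i}$'s goes through equivariantly") is not a construction: no such simultaneous-then-corrected scheme appears in \cite{camillepoleva}, and you do not say what the correcting functions are, why they can be chosen $G$-invariant, or why the correction terminates. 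As written, this is a genuine gap, since it is precisely the point where equivariance and the symplectic relations could conflict.

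The paper resolves this by never constructing more than one conjugate coordinate at a time. The proof is an induction on $r$: Frobenius applied to $\langle X_{p_1},\dots,X_{p_r}\rangle$ yields a (non-invariant) $\tilde q_r$ with $\{\tilde q_r,p_i\}=-\delta_{i,r}$; setting $q_r=\int_G g^*\tilde q_r\,\mathrm{d}\mu$ preserves these brackets because the $p_i$ are $G$-invariant and the action is Poisson --- only brackets against \emph{invariant} functions need to survive the averaging, so the obstruction you identified never arises. A second application of Frobenius to $\langle X_{p_r},X_{q_r}\rangle$ splits $\Pi$ as $\pp{}{q_r}\we\pp{}{p_r}$ plus a Poisson structure in the remaining variables $v_1,\dots,v_{n-2}$; one then checks that $p_1,\dots,p_{r-1}$ and the $G$-action descend to that transverse factor and recurses, Bochner being the base case $r=0$. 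The relations $\{q_i,q_r\}=0$ for $i<r$ come for free from the product structure, since the later $q_i$'s are functions of $v_1,\dots,v_{n-2}$ alone. If you want to salvage your version, the honest fix is to make your "recursive correction" into exactly this induction --- at which point the invariant metric, the equivariant transversal $\Sigma$ and the simultaneous flow parametrisation become unnecessary, and the one-coordinate-at-a-time Haar average does all the work.
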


\begin{proof}
 We show the theorem by induction on $r$.
 For $r=0$, the theorem reduces to the classical theorem of Bochner, that states that there exists a local system of
coordinates $z_1, \cdots, z_n $, centered at $m \in M $, on  which the action of $G$ is linear. Any such a system of coordinates satisfies
the three items of theorem \ref{thm:equivariantnormalform1}.

Now assume that the result holds true for every point in every Poisson manifold and every $r-1$ tuple of functions as above, with $r\geq1$, and let us prove that this still holds for arbitrary such
$r$-tuples. For that purpose, we consider an arbitrary point $m$ in a Poisson manifold $(M,\Pi)$, assumed to be a fixed point for the action of $G$, and we assume that we are given $G$-invariant and
Poisson commuting functions $p_1,\dots,p_r$, defined on a neighborhood of $m$, which vanish at $m$, and whose Hamiltonian vector fields are linearly independent at $m$. On a neighbourhood of $m$, the
distribution $\D:=\langle{\X}_{p_1},\dots,{\X}_{p_{r}} \rangle$ has constant rank $r$ and is an involutive distribution because $[\X_{p_i},\X_{p_j}]= -\X_{\pb{p_i,p_j}}=0$. By Frobenius Theorem, there
exist local coordinates $g_1,\dots, g_{n}$, centered at $m$, where $n:=\dim M$, such that $\X_{p_i}=\pp{}{g_i}$ for $i=1,\dots,r$,
on a neighbourhood $V $ of $m$. Setting $\tilde{q}_{r}:=g_{r}$ we
have, %
  \begin{equation} \label{eq:q1}
    {\X}_{\tilde{q}_r}[p_i]=-{\X}_{p_i}[\tilde{q}_r]= \pb{\tilde{q}_r, p_i}=-\delta_{i,r}, \qquad i=1,\dots,r.
  \end{equation}
 Without any loss of generality, one can assume that  the neighbourhood $V$ is $G$-invariant.  Let $q_r = \int_{g \in G} g^* \tilde{q}_r {\rm d}\mu  $ where $\mu $ is the normalized Haar measure on
$G$ and $g^* f $ is, for any function $f$ defined on a $g$-invariant subset of $M$,
 a shorthand for the function $m \to f(\rho(g,m)) $.
The function $ q_r$ is $G$-invariant by construction.
 Since $G$ acts by preserving the Poisson structure, and since $p_1,
\dots, p_r$ are $G$-invariant functions, we have, for all $g \in G $,
  $$ \pb{g^* \tilde{q}_r , p_i} = \pb{g^* \tilde{q}_r , g^*p_i}=  g^* \pb{\tilde{q}_r , p_i} =
-g^*\delta_{i,r} =-\delta_{i,r} \qquad i=1,\dots,r.$$ Integrating this last inequality  we obtain $\pb{ q_r , p_i} = - \delta_{i,r}$, and
 \begin{equation} \label{eq:q1prime}
    {\X}_{q_r}[p_i]=-{\X}_{p_i}[q_r]= \pb{q_r,
    p_i}= - \delta_{i,r}, \qquad i=1,\dots,r.
  \end{equation}
 In particular, the $r+1$ covectors $\diff_m p_1,\dots,\diff_m p_{r}$ and
 $\diff_m q_{r}$ of  $T^*_mM$ are linearly independent.

The distribution $\D'= \langle{\X}_{p_{r}},{\X}_{q_{r}}\rangle$ has rank $2$ at $m$, hence in a neighborhood of $m$.  It is an integrable distribution because
$[{\X}_{q_{r}},{\X}_{p_{r}}]=-{\X}_{\pb{q_{r},p_{r}}}=0.$ Applied to $\D'$, Frobenius Theorem shows that there exist local coordinates $v_1,\dots,v_{n}$, centered at $m$, such that %
\begin{equation}\label{eq:r_der}
  {\X}_{p_r}=\pp{}{v_{n-1}}\qquad\hbox{and}\qquad{\X}_{q_r}=\pp{}{v_n}.
\end{equation}%
 It is clear that the covectors $ \diff_m v_1,\dots,\diff_m v_{n-2}$ vanish on ${\X}_{p_r}(m)$ and on ${\X}_{q_r}(m)$,
 so that $(\diff_m v_1,\dots,\diff_m v_{n-2},\diff_m p_r,\diff_m q_r) $ is a
basis of $T^*_mM$. Therefore, the $n$ functions $(v_1,\dots,v_{n-2},p_{r},q_{r})$ form a system of local coordinates, centered at $m$. It follows from (\ref{eq:q1prime}) and (\ref{eq:r_der}) that the
Poisson structure takes in terms of these coordinates the following form: %
\begin{equation*}
  \Pi=\pp{}{q_r}\we\pp{}{p_r}+\sum_{i,j=1}^{n-2}h_{ij}(p_r,q_r,v_1,\dots,v_{n-2})\pp{}{v_i}\we\pp{}{v_j}.
\end{equation*}%
 The Jacobi identity, applied to the triples $(p_{r},v_i,v_j)$ and $(q_{r},v_i,v_j) $, implies that the functions $h_{ij}$ do not depend on the variables $p_{r},q_{r}$, so that %
\begin{equation} \label{eq:local1}
  \Pi=\pp{}{q_r}\we\pp{}{p_r}+\sum_{i,j=1}^{n-2}h_{ij}(v_1,\dots,v_{n-2})\pp{}{v_i}\we\pp{}{v_j},
\end{equation}
 which means that $\Pi$ is, in a $G$-invariant neighborhood of $m$, the product of a symplectic structure (on a neighborhood $V_S $ of the origin in $\R^2$) and a Poisson structure (on a neighborhood
$V_P $ of the origin in $\R^{n-2}$).

In order to apply the induction hypothesis, we need to show  that in case $r-1>0$,
 \begin{enumerate}
\item the functions $p_1,\dots,p_{r-1}$ depend only on the coordinates $v_1,\dots,v_{n-2}$, i.e., are independent of $p_r$ and $q_r$, %
\begin{equation}\label{eq:indep_r}
   \pp{p_i}{p_r} = 0=\pp{p_i}{q_r}\qquad i=1,\dots,r-1.
\end{equation}%
 \item  the action of $G$ depends only on the coordinates $v_1,\dots,v_{n-2}$, i.e. $g^* v_j $ is, for all $j=1, \dots, s$, a function that depends only on $v_1,\dots,v_{n-2}$.
\end{enumerate}

Both equalities in (\ref{eq:indep_r}) follow from the fact that $p_i$ Poisson commutes with $p_r$ and $q_r$, for $i=1,\dots, r-1$, combined with (\ref{eq:local1}): %
\begin{equation*}
  0=\pb{p_i,p_r}=\pp{p_i}{q_r},\qquad 0=\pb{p_i,q_r}=-\pp{p_i}{p_r}.
\end{equation*}%
 This proves the first point above.

For the second point, we proceed as follows: for any $g \in G$, $\rho(g, \cdot ) $ is defined in coordinates by
 $$  (p_r, q_r ,v_1,\dots,v_{n-2} ) \to \big( p_r, q_r, \alpha_1( p_r, q_r,
 v_1  ,\dots,v_{n-2} ), \dots, \alpha_{n-2}( p_r, q_r, v_1  ,\dots,v_{n-2}) \big)  $$
where we have exploited the fact that $p_r, q_r $ are $G$-invariant, and where, by definition, $\alpha_i = g^* v_i $ for $i=1, \dots n-2 $.
Now, the invariance of the Poisson bracket amounts to:
 $$ \pp{\alpha_i}{q_r}=   \pb{p_r, \alpha_i} =  \pb{p_r,
 g^* \alpha_i} =  g^* \pb{p_r,
   \alpha_i}   =  \pb{p_r , v_i} =  0 $$
  for $i=1, \dots, n-2 $.
Applying the same procedure to the functions $ p_r$ yields $ \pp{\alpha_i}{p_r} =0$.

We may now apply the induction hypothesis on the second term in (\ref{eq:local1}) and to the $r-1$-tuple of commuting functions $p_1, \dots, p_{r-1}$ to build coordinates that will satisfy the three
items of the theorem. By recursion, the theorem is valid for any integer $r$.
\end{proof}

\begin{cor}
Assume that the conditions of theorem \ref{thm:equivariantnormalform1} are satisfied, and that, moreover, the rank of $\Pi$ at $m$ is $2r$
and that the linearized part of the transversal Poisson
structure of $\Pi$ at $m$ is a semisimple compact Lie algebra $\mathfrak k$. Then the coordinates $(p_1,q_1,\dots,p_r,q_r, z_1,\dots,z_{s})$ can be chosen such that, in addition to the properties of
theorem \ref{thm:equivariantnormalform1}, we have:
$$\Pi = \sum_{i=1}^r \frac{\partial}{\partial p_i}\wedge
\frac{\partial}{\partial q_i} + \frac12\sum_{i,j,k} c^{k}_{ij} z_k \frac{\partial}{\partial z_i}\wedge \frac{\partial}{\partial z_j}$$ where $c_{ij}^k$ are structural constants of $\mathfrak k$.
\end{cor}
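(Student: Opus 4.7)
The plan is to reduce the problem, via Theorem \ref{thm:equivariantnormalform1}, to the linearization of the transversal Poisson structure at the origin of $\R^s$, and then to invoke a $G$-equivariant version of Conn's linearization theorem for semisimple Lie algebras of compact type. First I would apply Theorem \ref{thm:equivariantnormalform1} to obtain coordinates $(p_1,q_1,\dots,p_r,q_r,z_1,\dots,z_s)$ centered at $m$ in which $\Pi$ takes the split form (\ref{eq:thm_split2prime}) and $G$ acts linearly on the $z$-variables while fixing the coordinates $(p_i,q_i)$. Since the rank of $\Pi$ at $m$ equals $2r$, all the functions $g_{ij}$ vanish at $z=0$, so the transversal piece
\[
\Pi_T := \sum_{i,j=1}^s g_{ij}(z)\,\pp{}{z_i}\we\pp{}{z_j}
\]
is a Poisson structure on a neighborhood of $0\in\R^s$ that vanishes at the origin, and whose linearization at $0$ is, by assumption, the Lie--Poisson structure $\Pi_{\mathfrak k}:=\tfrac12\sum_{i,j,k}c^{k}_{ij}\,z_k\,\pp{}{z_i}\we\pp{}{z_j}$ of $\mathfrak k$.

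Next I would observe that the $G$-action on the $z$-coordinates, being linear and preserving $\Pi_T$, automatically preserves $\Pi_{\mathfrak k}$ as well, so $G$ acts by Lie algebra automorphisms on $\mathfrak k$. Conn's theorem for semisimple Lie algebras of compact type then provides a smooth local diffeomorphism $\psi$ of $(\R^s,0)$, tangent to the identity at $0$, such that $\psi_*\Pi_T=\Pi_{\mathfrak k}$. The crucial point is that $\psi$ can be arranged to be $G$-equivariant: this is the content of the equivariant linearization theorem proved by Miranda and Zung in \cite{mirandazung2006}. The underlying reason is that the entire iterative scheme of Conn's theorem can be run in the $G$-invariant category, since at each step the primitives needed to kill the Poisson cohomology can be chosen $G$-invariant by averaging with respect to the normalized Haar measure; the relevant Chevalley--Eilenberg cohomology of $\mathfrak k$ vanishes (Whitehead's lemma) by semisimplicity, and the averaging survives because $G$ is compact.

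Once such a $G$-equivariant $\psi$ is obtained, I would keep $(p_i,q_i)$ unchanged and replace $(z_1,\dots,z_s)$ by $\psi(z_1,\dots,z_s)$. Since $\psi(0)=0$ and $d_0\psi$ is invertible, the result is a new system of local coordinates centered at $m$. Because $\psi$ acts only on the $z$-block and pushes $\Pi_T$ to $\Pi_{\mathfrak k}$, the symplectic summand $\sum_i \pp{}{q_i}\we\pp{}{p_i}$ is untouched while the transversal summand is brought to the desired linear Lie--Poisson form. The $G$-invariance of the $(p_i,q_i)$ together with the $G$-equivariance of $\psi$ ensure that items (1)--(3) of Theorem \ref{thm:equivariantnormalform1} continue to hold in the new coordinates.

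The main obstacle is undoubtedly the equivariance of the linearizing diffeomorphism: the classical Conn theorem produces a non-equivariant $\psi$, and one cannot simply average a linearizing diffeomorphism to produce another one, since the Poisson condition $\psi_*\Pi_T=\Pi_{\mathfrak k}$ is not stable under averaging. The correct way to enforce equivariance is to incorporate $G$-invariance throughout the analytic construction of $\psi$, which is precisely what the equivariant theorem of \cite{mirandazung2006} accomplishes.
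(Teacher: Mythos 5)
Your proposal is correct and follows essentially the same route as the paper: reduce to the transversal factor via Theorem \ref{thm:equivariantnormalform1}, note that the rank hypothesis forces the $g_{ij}$ to vanish at $z=0$, and then invoke Conn's linearization theorem together with an equivariance argument. The only difference is in how the equivariance of the linearization is justified: the paper first applies Conn's theorem non-equivariantly and then cites Ginzburg \cite{ginzburg}, who shows that a Poisson structure which vanishes at a fixed point and is smoothly linearizable can be linearized simultaneously with any compact group action preserving it, whereas you propose to run Conn's iterative scheme directly in the $G$-invariant category (citing \cite{mirandazung2006}); both routes are legitimate and lead to the same conclusion.
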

\begin{proof}
The corollary simply follows from a famous result by Conn \cite{conn}, stating that any smooth Poisson structure, which vanishes at a point and whose linear part at that point is of semisimple compact
type, is locally smoothly linearizable. But this result has to be adapted, since, a priori, this change of coordinates might make the action non-linear. Fortunately, we can now use a result due to
Ginzburg \cite{ginzburg} stating that if a Poisson structure $\Pi$ vanishes at a point $p$ and is smoothly linearizable near $p$, if there is an action of a compact Lie group $G$ which fixes $p$ and
preserves $\Pi$, then $\Pi$ and this action of $G$ can be simultaneously linearized.\footnote{One could envisage to implement here the ideas of Crainic and Fernandes in \cite{rui1} and \cite{rui2} that
use the idea of stability to give a different proof of  this equivariant Conn's linearization theorem. The authors did not try this approach.}
\end{proof}

\section{Split completely integrable systems}

In this section, we will assume that our {\em completely integrable system} defines a regular foliation of dimension $ r $. By an
{\em isomorphism of completely integrable system}, we mean a
diffeomorphism which preserves both the Poisson structure and the foliation defining the integrable system.

  Given two completely integrable systems, the product of both Poisson manifolds, endowed with the product
of both foliations, is again a completely integrable system, defining henceforth the {\em direct product of completely integrable system}.

\bigskip
\noindent {\bf Convention.} We  denote by ${\mathcal F}$ the sheaf of functions constant on the leaves of the foliation, which means that, for a given open subset $U \subset M$, ${\mathcal F}_U$
stands for sections over $U$. Also $T_m{\mathcal F} $ stands for the tangent space of the leaf through $m \in M$.
\bigskip

\begin{remark} There are various alternative definitions of completely integrable systems : we insist on the requirement the the foliation is regular
- which can be obtained in any case by
considering only the open subset of regular points. \end{remark}

\begin{remark}
Note that not every Poisson structure can admit a completely integrable system defined as before. An obvious condition is, for instance, that at each point $m \in M$ where $\pi_m=0$,
the natural Lie algebra structure that the cotangent space $T_m^*M$ is endowed with needs to admit an Abelian Lie sub-algebra of dimension $ n-r  $, namely $
T_m{\mathcal F}^{\perp} $.
\end{remark}

  Let ${\mathcal S} $ be a symplectic leaf of a Poisson structure $(M,\pi)$. We make no assumption of regularity on ${\mathcal S} $, which means that the dimension $2r'$ of ${\mathcal S} $ may be
  strictly smaller that $2r$ - in fact the singular case is the one we are really interested in.
  Our basic assumption is the following.

\bigskip
\noindent {\bf Assumption.} From now, we assume that the restriction of the integrable system to ${\mathcal S}$ is an integrable system on a neighbourhood (in ${\mathcal S}$) of some fixed point $s
\in {\mathcal S}$, that is to say, we assume that one of the following equivalent conditions is satisfied:
\begin{enumerate}
\item there exists, in a neighbourhood $U$ of every $s \in {\mathcal
  S}$ functions $f_1,\dots, f _{r'} \in {\mathcal F}_U $ whose restrictions to ${\mathcal S} $ are independent
\item the following condition holds
$$ {\rm dim} (T_s{\mathcal F} \cap T_s {\mathcal S} ) = r' $$
for all $s \in {\mathcal S}$ (recall that  $r'=\frac{{\rm  dim} ({\mathcal S})}{2}$ by definition).
\end{enumerate}
We call {\em ${\mathcal F}$-regular} such a symplectic leaf.
\bigskip

We leave it to the reader to check the equivalence of these conditions.

Notice that every regular leaf is ${\mathcal F}$-regular. But it is  {\em not} automatically true when the leaf is singular. Here is a counter-example.

\bigskip
\noindent {\bf Counter-example.} On $M={\mathbb R}^4 $, define a Poisson structure by:
 $$ \pi = xy \pp{}{x} \we \pp{}{y} + \pp{}{q} \we \pp{}{p} .$$
Then $ {\mathcal T}=\{x=0,y=0\}$ is a singular leaf of dimension $2$. So that $n=4,r=2$ and $r'=1$.

Let $ {\mathcal F}$ be the foliation defined by the pair of functions $ \F= (x e^{p}, y e^{-q}) $. These functions are independent at all point of $ M$, hence this foliation is regular. One checks
easily the relation
 $$  \{ x e^{p}, y e^{-q}\}=0  ,$$
which guaranties that $\F$ is an integrable system. Now, $   {\rm dim} T_s ({\mathcal F}) \cap T_s {\mathcal S}  = 2 \neq r'  $.

Let us recall that   Weinstein's Splitting Theorem (\cite{weinstein}) states that,
 for every transverse submanifold  ${\mathcal T}
$ of ${\mathcal S}$ (i.e. a submanifold of $M$  crossing transversally ${\mathcal
  S} $ at a point $s \in {\mathcal S}$)
\begin{enumerate}
\item ${\mathcal T} $ is a Poisson-Dirac submanifold (in a
  neighborhood of $s$ in ${\mathcal T} $ at least) - and therefore inherits a natural Poisson structure $\pi_{\mathcal T} $.
\item the Poisson manifold $(M,\pi)$ is  (in a neighborhood of $s$
 in $M$) the direct product of the symplectic structure of ${\mathcal S} $  (restricted to a neighborhood of $s$
 in ${\mathcal S}$) with the induced Poisson structure on ${\mathcal T} $  (restricted to a neighborhood of $s$
 in ${\mathcal T}$).
\end{enumerate}

We wish to see whether or not a similar result can be established for a Poisson structure endowed with a completely integrable system. More precisely, given a ${\mathcal F} $-regular symplectic leaf
${\mathcal S} $ on a Poisson manifold $(M,\pi)$ endowed with a completely integrable system ${\mathcal F} $, and a transversal ${\mathcal T} $
through $s \in {\mathcal S} $. We want to address the
following problems.

\begin{enumerate}
 \item Does the transversal ${\mathcal T} $  inherits an integrable system (with respect to its
  induced Poisson-Dirac structure  $\pi_{\mathcal T} $) ?
\item When this is the case, is the integrable system $(M,\pi,{\mathcal F})$ the direct product of the restricted integrable system on ${\mathcal S} $ and the induced integrable system on ${\mathcal
    T}$?
\end{enumerate}

The answer to both question is negative in general, but necessary and sufficient conditions can be given to give a positive answer.
Let us start with a definition:

\begin{defn}
We say that a transversal ${\mathcal T} $ through $s \in {\mathcal S}$ is  ${\mathcal F}$-compatible when ${\mathcal F} $ restricts (at least in a neighborhood of $s$ in ${\mathcal T} $) to a regular
foliation ${\mathcal F}_{\mathcal T} $ of rank $r-r'$ on ${\mathcal T} $.
\end{defn}

Let us relate  regularity of a symplectic leaf and  existence of ${\mathcal F}$-compatible transversal,
\begin{lemma}\label{lemma:f1fr'}
Let $(M,\pi)$ be a Poisson manifold of, ${\mathcal F}$ a completely integrable system, and ${\mathcal S} $ a symplectic leaf of dimension $2r'$
\begin{enumerate}
\item The symplectic leaf $ {\mathcal S}$ is ${\mathcal F} $-regular at $s \in {\mathcal S}$ if and only if there exists a ${\mathcal F}$-compatible transversal through $s$. \item In this case,
    there exists, in a neighborhood of $s$, a foliation for which all leaves are ${\mathcal F}$-compatible transversals.
\end{enumerate}
\end{lemma}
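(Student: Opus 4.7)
My approach is to translate both items into linear-algebraic statements on the tangent spaces at $s$ and then to produce an explicit local model via Theorem~\ref{thm:localsplitting}.

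The key observation is that on the symplectic leaf $(\mathcal{S},\omega_\mathcal{S})$ the local defining functions $f_1,\dots,f_s$ of $\mathcal{F}$ yield Hamiltonian vectors $X_{f_i}(s)\in T_s\mathcal{S}$ whose pairwise symplectic pairings $\omega_\mathcal{S}(X_{f_i},X_{f_j})=\{f_i,f_j\}(s)$ vanish by involutivity; their span is thus isotropic of some dimension $k\leq r'$. Since $df_i|_{T_s\mathcal{S}}=\omega_\mathcal{S}(X_{f_i},\cdot)$, the intersection $T_s\mathcal{F}\cap T_s\mathcal{S}$ coincides with the $\omega_\mathcal{S}$-orthogonal of this span, of dimension $2r'-k$, so one always has $\dim(T_s\mathcal{F}\cap T_s\mathcal{S})\geq r'$, with equality if and only if $k=r'$, that is to say if and only if $\mathcal{S}$ is $\mathcal{F}$-regular at $s$. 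For the $(\Leftarrow)$ direction of item $(1)$, I combine this lower bound with the elementary fact that, for any transversal $\mathcal{T}$, the subspaces $T_s\mathcal{F}\cap T_s\mathcal{S}$ and $T_s\mathcal{F}\cap T_s\mathcal{T}$ of $T_s\mathcal{F}$ have trivial intersection (since $T_s\mathcal{S}\cap T_s\mathcal{T}=0$), yielding
$$\dim(T_s\mathcal{F}\cap T_s\mathcal{S})+\dim(T_s\mathcal{F}\cap T_s\mathcal{T})\leq\dim T_s\mathcal{F}=r.$$
If $\mathcal{T}$ is $\mathcal{F}$-compatible then the second term equals $r-r'$, forcing the first to be exactly $r'$, so $\mathcal{S}$ is $\mathcal{F}$-regular at $s$.

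For the converse direction of $(1)$ and for item $(2)$, $\mathcal{F}$-regularity together with the identification above provides $r'$ of the generators whose Hamiltonian vector fields are linearly independent at $s$; after relabelling and subtracting constants, I call them $p_1,\dots,p_{r'}$ (now vanishing at $s$). Theorem~\ref{thm:localsplitting} then yields local coordinates $(p_1,q_1,\dots,p_{r'},q_{r'},z_1,\dots,z_\ell)$ centred at $s$, with $\ell=n-2r'$, in which
$$\Pi=\sum_{i=1}^{r'}\pp{}{q_i}\we\pp{}{p_i}+\sum_{i,j=1}^{\ell}g_{ij}(z)\pp{}{z_i}\we\pp{}{z_j},\qquad g_{ij}(0)=0,$$
the vanishing of $g_{ij}(0)$ being forced because the rank of $\Pi$ at $s$ equals $2r'$; in particular $\mathcal{S}$ is locally $\{z=0\}$. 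Each remaining defining function $f_j$ Poisson-commutes with every $p_i$, so the identity $\pp{f_j}{q_i}=-\{p_i,f_j\}=0$ forces $f_j$ to depend only on the variables $(p,z)$.

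I would then take as transversals the leaves of the foliation $\mathcal{T}_{c,d}:=\{p=c,\ q=d\}$, which foliate a neighbourhood of $s$; each meets $\mathcal{S}$ transversally at the single point $(c,d,0)$ and is parametrised by $z$. The restriction of $\mathcal{F}$ to $\mathcal{T}_{c,d}$ is defined by the functions $z\mapsto f_j(c,z)$ for $j$ outside the chosen set $\{1,\dots,r'\}$, and the original independence of $df_1,\dots,df_s$ at $s$ reads, in these coordinates, as the independence of the $z$-differentials of these remaining functions at $z=0$; by continuity this persists for $(c,d)$ near the origin, so $\mathcal{F}|_{\mathcal{T}_{c,d}}$ is a regular foliation of the asserted rank $r-r'$ in a neighbourhood of $(c,d,0)$. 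This simultaneously produces the $\mathcal{F}$-compatible transversal through $s$ (the case $(c,d)=0$) needed for the $(\Rightarrow)$ direction of $(1)$ and the foliation by $\mathcal{F}$-compatible transversals required in $(2)$. The delicate step is the identification $T_s\mathcal{F}\cap T_s\mathcal{S}=\operatorname{span}(X_{f_i}(s))^{\omega_\mathcal{S}}$ in the first paragraph, as it is what makes both the lower and upper bounds for $\dim(T_s\mathcal{F}\cap T_s\mathcal{S})$ available and thereby drives both directions of $(1)$.
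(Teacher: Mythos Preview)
Your proof is correct and follows essentially the same route as the paper's: both the paper and you obtain the compatible transversals as the level sets $\{p_i=a_i,\ q_i=b_i\}$ coming from the Carath\'{e}odory--Jacobi--Lie theorem applied to $r'$ functions $p_1,\dots,p_{r'}\in\mathcal{F}$ with independent Hamiltonian vector fields, and both verify that the remaining $f_j$'s restrict to independent functions on each such slice. Your treatment of the $(\Leftarrow)$ direction via the identification $T_s\mathcal{F}\cap T_s\mathcal{S}=\operatorname{span}(X_{f_i}(s))^{\omega_\mathcal{S}}$ and the resulting inequality $\dim(T_s\mathcal{F}\cap T_s\mathcal{S})\geq r'$ is in fact more detailed than the paper's, which asserts this implication in a single sentence.
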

\begin{proof}
1) If there exists a ${\mathcal F}$-compatible transversal through $s$, then the intersection of $T_s {\mathcal S}$ with $T_s{\mathcal F} $ has to have dimension $r'$, i.e. the symplectic leaf $
{\mathcal S}$ is ${\mathcal F} $-regular at $s \in {\mathcal S}$.

Conversely, let us assume that $s \in {\mathcal S}$. Let $f_1, \dots, f_{r'}$ be functions in ${\mathcal F}_U $  ($U$ a neighborhood of $s$ in $M$) whose restriction to ${\mathcal S} $ form an
integrable system on it.

By shrinking  the neighbourhood $U$ if necessary, we can extend $f_1, \dots, f_{r'}$ to a family  $f_1, \dots, f_{n-r} $ of independent functions defining the foliation ${\mathcal F}_U $. Shrinking
again $U$ if necessary, we can construct, according to the Carath\'{e}odory-Jacobi-Lie theorem \ref{thm:localsplitting} , functions $g_1, \dots, g_{r'}$ such that $\{f_i,g_j\}(s)= \delta_{i}^j $ for all
$i,j=1,\dots, r'$ for every $s \in {\mathcal S} \cap U$. We can assume  without any loss of generality that all the previously constructed
 functions vanish at $s$.

The functions $f_1,\dots, f_{r'},g_1, \dots, g_{r'} $ define local coordinates on ${\mathcal S} $, so that the following manifold is transversal to ${\mathcal S} $
 $$ {\mathcal T} = \{  f_1=\dots= f_{r'}=g_1= \dots= g_{r'}=0 \} \cap U  .$$
Shrinking $U$ again, one can assume that the functions  $f_1,\dots,f_{n-r},g_1, \dots, g_{r'} $ are independent, so that the intersection
$$ T_x {\mathcal F} \cap  T_x {\mathcal T}  $$
is the dual of the space
$$  {\rm d}_x f_{r'+1},\dots,{\rm d}_x f_{n-r}  $$
and has therefore dimension $ r-r' $. This proves the first claim.

2) The second claim is obtained by defining a foliation by:
$${\mathcal T}_{AB} = \{  f_1=a_1, \dots, f_{r'}=a_{r'}, g_1= b_1, \dots , g_{r'}=b_{r'} \} \cap U ,$$
where $A=(a_1, \dots, a_{r'})$ and $B=(b_1, \dots, b_{r'}) $ for all $A,B $ in a neighborhood of $0 \in \R^n$.
\end{proof}

The following proposition replies to the first question above

\begin{prop}
Let $(M,\pi)$ be a Poisson manifold of dimension $n$ and rank $2r$, ${\mathcal F}$ a completely integrable system, and ${\mathcal S} $ a ${\mathcal F} $-regular symplectic leaf of dimension $2r'$.
Then, for every  ${\mathcal F}$-compatible transversal ${\mathcal T} $ through $s \in {\mathcal S}$, the induced regular foliation ${\mathcal F}_{\mathcal T} $ is an integrable system with respect to
the induced Poisson structure $\pi_{\mathcal T} $.
\end{prop}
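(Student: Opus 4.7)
The plan is to reduce to a distinguished transversal obtained by applying the Carath\'{e}odory-Jacobi-Lie theorem (Theorem~\ref{thm:localsplitting}) to a carefully chosen sub-family of ${\mathcal F}$, and then to read off involutivity of the induced functions from the product form of $\pi$. First, by ${\mathcal F}$-regularity of ${\mathcal S}$ at $s$, pick local sections $f_1, \dots, f_{r'} \in {\mathcal F}_U$ whose restrictions to ${\mathcal S}$ are independent at $s$; they are automatically in involution (as they belong to ${\mathcal F}$), and their Hamiltonian vector fields are linearly independent at $s$, since their restrictions to the symplectic leaf already are. Apply Theorem~\ref{thm:localsplitting} to the involutive family $(p_1,\dots,p_{r'}) := (f_1,\dots,f_{r'})$ to produce coordinates $(p_1,q_1,\dots,p_{r'},q_{r'},z_1,\dots,z_{n-2r'})$ in which $\pi$ becomes the product of the canonical symplectic structure in $(p,q)$ with a Poisson structure $\pi_z = \sum g_{ij}(z)\,\partial_{z_i}\wedge\partial_{z_j}$ in the $z$-variables; the slice ${\mathcal T}_0 := \{p=q=0\}$ is then a transversal to ${\mathcal S}$ at $s$ whose induced Poisson-Dirac structure is exactly $\pi_z$.

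Next, after possibly shrinking the neighbourhood, extend $f_1,\dots,f_{r'}$ to a family $f_1,\dots,f_{n-r}$ of independent functions locally defining ${\mathcal F}$. The crucial observation is that, for $i>r'$, the identities $\{f_i,p_k\}=\{f_i,f_k\}=0$ (Poisson-commutativity inside ${\mathcal F}$) translate, in these coordinates, into $\partial f_i/\partial q_k=0$ for every $k=1,\dots,r'$; hence each such $f_i$ is a function of $(p_1,\dots,p_{r'},z_1,\dots,z_{n-2r'})$ alone. Substituted into the bracket, this kills the symplectic contribution and leaves only the $\pi_z$-bracket of the corresponding expressions in $(p,z)$. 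Restricting to ${\mathcal T}_0$ (where $p=0$) and using $f_i|_{{\mathcal T}_0}(z)=f_i(0,z)$, one obtains $\{f_i|_{{\mathcal T}_0},f_j|_{{\mathcal T}_0}\}_{\pi_{{\mathcal T}_0}}=0$. Together with ${\mathcal F}$-compatibility — which guarantees that ${\mathcal F}_{{\mathcal T}_0}$ is a regular foliation of rank $r-r'$, and hence that the $n-r-r'$ restrictions $f_{r'+1}|_{{\mathcal T}_0},\dots,f_{n-r}|_{{\mathcal T}_0}$ are independent on ${\mathcal T}_0$ — and the dimension count $(r-r')+(n-r-r')=n-2r'=\dim {\mathcal T}_0$, this establishes that ${\mathcal F}_{{\mathcal T}_0}$ is an integrable system on $({\mathcal T}_0,\pi_{{\mathcal T}_0})$.

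For a general ${\mathcal F}$-compatible transversal ${\mathcal T}$ through $s$, I would invoke the uniqueness built into Weinstein's splitting theorem: any two transversals to ${\mathcal S}$ at $s$ are related by a local Poisson diffeomorphism fixing $s$, which by ${\mathcal F}$-compatibility carries the induced foliation on one onto that of the other, so the conclusion propagates from ${\mathcal T}_0$ to ${\mathcal T}$. The main technical point I expect to be the $q$-independence used in the middle paragraph: \emph{a priori} the ambient bracket $\{f_i,f_j\}_M$ and the Poisson-Dirac bracket on the transversal need not agree upon restriction, so the vanishing of the former does not formally imply that of the latter. What rescues the argument is the \emph{simultaneous} normal form provided by Theorem~\ref{thm:localsplitting}: it straightens $\pi$ while keeping the chosen involutive family as half of the canonical coordinates, forcing the remaining foliation-defining functions to depend only on the transverse variables.
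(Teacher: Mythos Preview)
Your argument for the distinguished transversal ${\mathcal T}_0=\{p=q=0\}$ is correct and pleasant: once the $f_i$ with $i>r'$ are shown to be $q$-independent, the symplectic part of the bracket drops out and involutivity on ${\mathcal T}_0$ follows immediately from the product form of $\pi$.

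The gap is in your final paragraph. You propose to transport the conclusion from ${\mathcal T}_0$ to an arbitrary ${\mathcal F}$-compatible transversal ${\mathcal T}$ via the Poisson diffeomorphism furnished by Weinstein's splitting theorem, asserting that ``by ${\mathcal F}$-compatibility'' this diffeomorphism carries ${\mathcal F}_{{\mathcal T}_0}$ onto ${\mathcal F}_{\mathcal T}$. This inference is unjustified, and in fact false in general: Weinstein's isomorphism between transversals is a Poisson map, but nothing forces it to respect the ambient foliation ${\mathcal F}$. The counter-example given immediately after Proposition~\ref{lem:localtriv} exhibits two ${\mathcal F}$-compatible transversals whose induced integrable systems are \emph{not} isomorphic; were your step valid, such a phenomenon could never occur. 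So the reduction to a single preferred transversal does not close the argument.

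The paper's proof sidesteps this entirely by working directly on an arbitrary ${\mathcal F}$-compatible transversal ${\mathcal T}$ and using the very definition of the Poisson--Dirac bracket. One chooses $f_1,\dots,f_{r'}\in{\mathcal F}$ that restrict to an integrable system on ${\mathcal S}$ and vanish on ${\mathcal T}$; their Hamiltonian vector fields span a complement to $T_m{\mathcal T}$ inside $T_m{\mathcal F}$, so for any $f\in{\mathcal F}$ one can subtract a linear combination $\sum\lambda_i f_i$ to make $\X_{f-\sum\lambda_i f_i}(m)$ tangent to ${\mathcal T}$. Since the $f_i$ vanish on ${\mathcal T}$, this does not change the restriction, and hence $\{f,f'\}_{\mathcal T}(m)=\{f-\sum\lambda_i f_i,f'\}(m)=0$ by involutivity of ${\mathcal F}$. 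This is both shorter and avoids any appeal to uniqueness of the transverse structure. If you want to salvage your coordinate approach, you would need to show that the Carath\'{e}odory--Jacobi--Lie coordinates can be chosen so that the \emph{given} transversal ${\mathcal T}$ becomes a coordinate slice; this is possible but requires additional work beyond what Theorem~\ref{thm:localsplitting} provides.
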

\begin{proof}
The Poisson-Dirac structure on ${\mathcal T} $ has rank $2(r-r')$, and the foliation ${\mathcal F}_{\mathcal T} $ has rank $r-r' $. The condition on dimensions is therefore satisfied, and we are left
with the task of proving that every two functions in ${\mathcal F}_{\mathcal T} $ Poisson-commute.

 This point is not trivial, for commuting functions of the ambient space do not need to commute any more,
 when restricted to a  Poisson-Dirac submanifold. Let $m \in {\mathcal
 T}$, and $f,f' \in {\mathcal F}$ be defined in a neighborhood of $m$.
 A priori $\X_f(m)$ is not tangent to ${\mathcal T} $.
 To overcome this issue, let us consider functions $f_1, \dots, f_{r'} \in {\mathcal F}$, whose restrictions
 to ${\mathcal S} $ are independent. We can moreover assume that the restriction to $ {\mathcal T}$ of these functions are equal to zero. The
 vectors $ \X_{f_1}(m), \dots, \X_{f_r}(m) $, for $m$ close
 enough from $s $, do generate a subspace in $T_m {\mathcal F} $ which
 has no intersection with $T_m {\mathcal T} $. Hence, there exists a linear
 combination of the form $ \sum_{i=1}^{r'} \lambda_i \X_{f_i}(m) $ such that
   $$  \X_f (m) -\sum_{i=1}^{r'} \lambda_i \X_{f_i}(m)   \in  T_m
   {\mathcal T}  $$
Now the restriction to ${\mathcal T} $ of $f$ and $f-\sum_{i=1}^{r'}
   \lambda_i f_i $ are equal, so that,
 by definition of the Poisson-Dirac bracket on $ {\mathcal T}  $, we have:
   $$ \{f,f'\}_{\mathcal T} (m) = \{f- \sum_i \lambda_i f_i , f'\}
   (m)= 0.$$
This completes the proof.
\end{proof}

\begin{defn}
We say that the integrable system ${\mathcal F}$ is {\rm split} at a point $s$ in a ${\mathcal F} $-regular symplectic leaf ${\mathcal S} $
if there exists a neighborhood $U $ of $s \in M $ which is
isomorphic, as a completely integrable system, to the direct product of,
\begin{enumerate}
\item the induced completely integrable system on $ {\mathcal S} \cap U $,
\item  and a completely integrable system on an open ball of dimension
    $B^{n-2r'} $, called {\rm transverse completely integrable system}.
\end{enumerate}
\end{defn}


Said differently,  ${\mathcal F}$ is {\rm split} at a point $s$ in a ${\mathcal F} $-regular symplectic leaf ${\mathcal S} $ if and only if there exists local coordinates  $p_1,\dots, p_{r'},q_1,
\dots, q_{r'},z_1,\dots, z_s $ defined in a neighborhood $U$ of $s$ in $M$, where $p_1,\dots,p_{r'},z_1,\dots, z_{n-r-r'}$ generate ${\mathcal F}_U $, and in which $\pi$ reads:
  $$ \pi = \sum_{i=1}^{r'} \frac{\partial}{\partial q_i} \wedge \frac{\partial}{\partial p_i} +
   \sum_{0 \leq i < j \leq n -2r'} \Phi_{ij}(z)  \frac{\partial}{\partial z_i} \wedge \frac{\partial}{\partial z_j} .$$

 \begin{prop}\label{lem:localtriv}
Let $(M,\pi)$ be a Poisson manifold of dimension $n$ and rank $2r$, ${\mathcal F}$ a completely integrable system, and ${\mathcal S} $ a ${\mathcal F} $-regular symplectic leaf of dimension $2r'$.
then if the integrable system is split at ${\mathcal S} $, then all the compatible transversals have induced completely integrable systems isomorphic to the transverse integrable system.
 \end{prop}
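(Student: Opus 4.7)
The plan is to use the splitting hypothesis to produce a canonical local identification of any compatible transversal with the ``standard'' one coming from the splitting, and to verify separately that this identification preserves the Poisson structure and the foliation. Let $(p_1,\dots,p_{r'},q_1,\dots,q_{r'},z_1,\dots,z_{n-2r'})$ be splitting coordinates on a neighborhood $U$ of $s$, and set $\mathcal{T}_0:=\{p_1=\cdots=p_{r'}=q_1=\cdots=q_{r'}=0\}\cap U$. By the very definition of splitness, $\mathcal{T}_0$ is a compatible transversal whose induced integrable system is the transverse integrable system appearing in the splitting. Given an arbitrary compatible transversal $\mathcal{T}$ through $s$, the natural candidate for the required isomorphism is the map $\Phi\colon\mathcal{T}\to\mathcal{T}_0$ obtained by restricting to $\mathcal{T}$ the projection $(p,q,z)\mapsto(0,0,z)$.

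That $\Phi$ is a local diffeomorphism at $s$ is immediate: its differential at $s$ has kernel $T_s\mathcal{T}\cap T_s\mathcal{S}=0$ by transversality, and source and target have the same dimension $n-2r'$. That $\Phi$ respects the foliations is equally transparent from the split form: the leaves of $\mathcal{F}$ in $U$ are joint level sets of $p_1,\dots,p_{r'},z_1,\dots,z_{n-r-r'}$ and hence factor as genuine products $\{p=a\}\times\{z_1=c_1,\dots,z_{n-r-r'}=c_{n-r-r'}\}$; consequently the image under $\Phi$ of an $\mathcal{F}_{\mathcal{T}}$-leaf lies inside the $\mathcal{F}_{\mathcal{T}_0}$-leaf $\{z_j=c_j\}$. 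Since both foliations have the same leaf dimension $r-r'$ and $\Phi$ is a local diffeomorphism, this inclusion is a germwise equality.

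The delicate step, and the one I expect to be the main obstacle, is showing that $\Phi$ intertwines the Poisson--Dirac structures on $\mathcal{T}$ and $\mathcal{T}_0$. The starting observation is that the full ambient projection $\mathrm{pr}\colon U\to\mathcal{T}_0,\ (p,q,z)\mapsto(0,0,z)$, is itself a Poisson map, because the splitting form $\pi=\sum\partial_{q_i}\wedge\partial_{p_i}+\sum_{i<j}\Phi_{ij}(z)\partial_{z_i}\wedge\partial_{z_j}$ implies that the bracket of two functions depending only on the $z$-variables is again a function of the $z$-variables. For $F,G\in C^{\infty}(\mathcal{T}_0)$ the extensions $\widetilde F:=F\circ\mathrm{pr}$ and $\widetilde G:=G\circ\mathrm{pr}$ therefore satisfy $\{\widetilde F,\widetilde G\}_\pi=\{F,G\}_{\mathcal{T}_0}\circ\mathrm{pr}$ and restrict on $\mathcal{T}$ to $F\circ\Phi,\ G\circ\Phi$. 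It remains to identify this restricted bracket with the Poisson--Dirac bracket $\{F\circ\Phi,G\circ\Phi\}_\mathcal{T}$. I would handle this either by adding corrections to $\widetilde F,\widetilde G$ vanishing on $\mathcal{T}$ so as to make their Hamiltonian vector fields tangent to $\mathcal{T}$ (and observing, via Leibniz and the fact that $X_{\widetilde F},X_{\widetilde G}$ already have only $z$-components, that these corrections do not alter the value of the bracket on $\mathcal{T}$), or more conceptually by invoking Weinstein's splitting theorem: both $\mathcal{T}$ and $\mathcal{T}_0$ are transversals to the same symplectic leaf $\mathcal{S}$ at $s$, and $\Phi$ is precisely the identification of their transverse Poisson structures obtained by sliding along $\mathcal{S}$ inside the given splitting. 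Putting the three steps together, $\Phi$ is an isomorphism of completely integrable systems, which is the desired conclusion.
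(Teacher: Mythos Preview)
Your candidate map $\Phi$ (the restriction to $\mathcal{T}$ of the projection $(p,q,z)\mapsto(0,0,z)$) does carry the induced foliation $\mathcal{F}_{\mathcal{T}}$ to $\mathcal{F}_{\mathcal{T}_0}$, for exactly the reason you give. The gap is in the Poisson step: $\Phi$ is \emph{not} a Poisson diffeomorphism between $(\mathcal{T},\pi_{\mathcal{T}})$ and $(\mathcal{T}_0,\pi_{\mathcal{T}_0})$ in general. Concretely, if $\mathcal{T}=\{p_i=\phi_i(z),\,q_i=\psi_i(z)\}$, then in the $z$-coordinates the induced Poisson--Dirac structure on $\mathcal{T}$ is the gauge transform of $\pi_{\mathcal{T}_0}$ by the closed $2$-form $B=\sum_i d\phi_i\wedge d\psi_i$; this is a genuinely different bivector whenever $B\neq 0$. (For instance, with $r'=1$, $\pi_{\mathcal{T}_0}=f(z)\,\partial_{z_1}\wedge\partial_{z_2}$ and $\phi=z_1$, $\psi=z_2$, one computes $\pi_{\mathcal{T}}=\tfrac{f}{1-f}\,\partial_{z_1}\wedge\partial_{z_2}$.) Your justification (a) breaks precisely here: although $X_{\widetilde G}$ has only $z$-components, those components are \emph{not} tangent to the graph $\mathcal{T}$, so $X_{\widetilde G}$ applied to the defining functions $p_i-\phi_i(z)$, $q_i-\psi_i(z)$ gives $-\{\phi_i,G\}_{\pi_{\mathcal{T}_0}}$, $-\{\psi_i,G\}_{\pi_{\mathcal{T}_0}}$, which do not vanish on $\mathcal{T}$. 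Hence the corrections \emph{do} alter the restricted bracket. Your justification (b) is likewise off: the isomorphism of transverse Poisson structures supplied by Weinstein's theorem is a Moser-type flow, not the projection along the leaf.

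The paper's argument takes this into account. It identifies $\pi_{\mathcal{T}}$ as the $B$-gauge transform of $\pi_{\mathcal{T}_0}$ and then produces the isomorphism as the time-$1$ flow of $\pi_{\mathcal{T}_0}^{\#}\theta$ for a primitive $\theta$ of $B$. The point you are missing, and where the $\mathcal{F}$-compatibility hypothesis is actually used, is that compatibility forces $\phi_1,\dots,\phi_{r'}$ to depend only on the foliation variables $z_1,\dots,z_{n-r-r'}$; this makes $B$ vanish along $\mathcal{F}_{\mathcal{T}}$, so by a foliated Poincar\'e lemma one may choose $\theta$ vanishing along $\mathcal{F}_{\mathcal{T}}$, whence $\pi_{\mathcal{T}_0}^{\#}\theta$ is tangent to the foliation and its flow is an isomorphism of integrable systems. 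Your $\Phi$ can therefore be salvaged only after post-composing with this Moser flow; on its own it is not the right map.
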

\begin{proof}
Let $p_1,\dots, p_{r'},$
$q_1, \dots, q_{r'},z_1,\dots, z_s $ be split coordinates, where $p_1,\dots,p_{r'},$ $z_1,\dots, z_{n-r-r'}$ generate ${\mathcal F} $. The transverse manifold ${\mathcal T} $ is given by equations of the
form
 $$ p_i=\phi_i(z_1,\dots,z_s), q_i =\psi_i(z_1,\dots,z_s) \hbox{ for $i=1,\dots,r'$} .$$
 Requiring the transversal to be ${\mathcal F}$-compatible amounts to requiring that the functions
$\phi_1,\dots,\phi_{r'}$ depend only on $ z_1,\dots, z_{n-r-r'} $. Said differently, the functions $ z_1,\dots, z_{n-r-r'} $ define the induced integrable system on $T$ and on ${\mathcal T} $ as
well.

  The induced Poisson structure on ${\mathcal T}$ is precisely the
Poisson structure obtained from $\pi_T $ with the help of the gauge $B=\sum_i {\rm d}\phi_i \wedge {\rm d}\psi_i $ (which is a $2$-form) (see for instance lemma 2.2 in \cite{weinstein}) Recall that
this implies that for every $1$-form $\theta$ with $ {\rm d} \theta =B$ (form that we can assume to vanish at $z=0$), the flow $(\phi_t)$ of $\pi^{\#} \theta$, evaluated at the time $t=1$, is a
Poisson diffeomorphism between $\pi_T$ and $\pi_{\mathcal T}$. Now observe that $B$ vanishes when restricted to $ {\mathcal F}_{\mathcal T}$, i.e. the foliation defined by $ z_1,\dots, z_{n-r-r'} $,
so that $ \theta$ can be assumed to satisfy the same property by the foliated version of the Poincar\'{e} Lemma. This, in turn, amounts to the fact that $\pi^{\#} \theta$ is a vector field tangent to
the foliation $ {\mathcal F}_{\mathcal T}$. The flow $(\Phi)_t $ computed with the help of that particular $\theta $ preserves the foliation, hence it is a isomorphism of integrable system.
\end{proof}

Notice that this lemma implies that the notion of transverse completely integrable system is well-defined for split completely integrable systems.

\bigskip
\noindent {\bf Counter-example.} A completely integrable system is not, in general, split in a neighborhood of a point in a ${\mathcal F}$-regular symplectic leaf.
 Consider the following counter-example.
  On ${\mathbb R}^4$, define a Poisson structure by
  $$ \pp{}{x} \we \pp{}{y}  + p \pp{}{p}  \we \pp{}{q}  $$
 and a completely integrable system with the help of the functions $ x, p+xq$.

 The ${\mathcal F}$-compatible transversal $ x=0,y=0$ admits for
induced foliation defined by the function $p $,
 while the transversal $ x=1,y=0$ admits the induced foliation defined by the function $p+q$.
For the first one, the singular locus of the induced Poisson structure is a leaf. This is not the case for the second one. So that there exists two transversals
which admit non-isomorphic induced
completely integrable system. By proposition \ref{lem:localtriv}, this integrable system is not trivial.
\bigskip

In \cite{camilleeva}, we give a necessary and sufficient condition for local triviality of regular completely integrable systems on Poisson manifolds. This characterization is done in terms of \emph{foliated
Poisson cohomology}. Also, the conditions of a system to be split and rigid are related in \cite{evaintegrablegroup}.

\section{Equivariant split integrable systems}

To conclude this article, we present a last statement where the completely integrable systems under consideration is split and a part of it is equivariant with respect to the action of a compact Lie group $G$.

\begin{thm}\label{thm:equivariantnormalform2}
   Let $(M,\Pi)$ be a Poisson manifold of dimension $n=2r+s$, acted upon by a compact Lie group $G$,
whose action preserves the Poisson structure. Let $p_1,\dots,p_{n-r}$ be an integrable system which is globally invariant, i.e. such that the $G$-action
preserves the foliation. Given a symplectic leaf
all whose points are fixed by the $\G$-action and for which the integrable system is split, there exist, in a neighborhood $U$ of $m$,
functions $q_1,\dots,q_r,z_1,\dots,z_{s}$,
  such that
  \begin{enumerate}
    \item the $n$ functions $(p_1,q_1,\dots,p_r,q_r, z_1,\dots,z_{s})$
    form a system of local coordinates, centered at $m$;
    \item the Poisson structure $\Pi$ is given in these coordinates by
    \begin{equation}\label{eq:thm t2}
      \Pi=\sum_{i=1}^r\pp{}{q_i}\we\pp{}{p_i}+\sum_{i,j=1}^s g_{ij}(z)\pp{}{z_i}\we\pp{}{z_j},
    \end{equation}
    where each function $g_{ij}(z)$ is a smooth function and is
    independent from $p_1,\dots,p_r, q_1,\dots,q_r$.
    \item  There exists a group homomorphism from $G$ to the group of $s \times s$
    invertible matrices, denoted by $g \to A(g)$, so that the $G$-action on
    $U$ is given, in the previous coordinates, for all $g \in G$, by
 $$ \rho \big(g  , (p_1,q_1,\dots,p_r,q_r, z_1,\dots,z_{s} ) \big) = \\
 ( p_1,q_1,\dots,p_r,q_r,\sum_{i=1}^s  m_{i,1} (g) z_i,\dots,\sum_{i=1}^s  m_{i,s} (g) z_i ) .$$
\item the family $ (p_1,\dots,p_r,z_1,\dots,z_{n-2r})$ generates the integrable system. %
\end{enumerate}
\end{thm}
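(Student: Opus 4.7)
The plan is to combine the splitting hypothesis for $\mathcal{F}$ with Theorem~\ref{thm:equivariantnormalform1}. First, by the splitting hypothesis, one obtains (non-equivariant) local coordinates $(\alpha_i,\beta_i,\gamma_j)_{i\leqs r,\,j\leqs s}$ on a neighborhood of $m$ in which $\Pi$ has the Weinstein split form and $\mathcal{F}$ decomposes as a product of a Lagrangian foliation on the symplectic leaf factor (of dimension $2r$) and a foliation on the transverse Poisson factor (of dimension $s=n-2r$). The transverse part of $\mathcal{F}$ consists of $s$ generators on the $s$-dimensional transversal; by the implicit function theorem we can take these generators to be the transverse coordinates themselves, so that $\mathcal{F}$ is generated by $(\alpha_1,\dots,\alpha_r,\gamma_1,\dots,\gamma_s)$. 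Because the $\gamma_j$ pairwise Poisson-commute (being elements of the involutive family $\mathcal{F}$) and depend only on the transversal, the transverse Poisson tensor must vanish identically in this neighborhood; hence $\Pi=\sum_i \pp{}{\beta_i}\we\pp{}{\alpha_i}$, and $\Pi$ has constant rank $2r$ near $m$.

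Next, average to obtain $G$-invariant generators: set $\bar p_i := \int_G g^*\alpha_i\,d\mu(g)$ for $i=1,\dots,r$, where $\mu$ is the normalized Haar measure. Each $\bar p_i$ is $G$-invariant and, since $g^*\alpha_i\in\mathcal{F}$ for every $g\in G$ (because $\mathcal{F}$ is $G$-stable), lies in $\mathcal{F}$; the $\bar p_i$ pairwise Poisson-commute by involutivity of $\mathcal{F}$. Since every point of the leaf is $G$-fixed, the restrictions $\bar p_i|_{\mathcal{S}}$ and $\alpha_i|_{\mathcal{S}}$ coincide, so the differentials $d\bar p_i$ are linearly independent at $m$ and the Hamiltonian vector fields $\X_{\bar p_i}$ are linearly independent there as well. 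Applying Theorem~\ref{thm:equivariantnormalform1} to $(\bar p_1,\dots,\bar p_r)$ (writing $p_i := \bar p_i$) then yields coordinates $(p_i,q_i,z_j)$ satisfying items~(1)--(3) of the statement.

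It remains to establish item~(4). The rank of $\Pi$ is coordinate-invariant, so by the previous step it is constantly $2r$ in a neighborhood of $m$ expressed in the new coordinates as well. This forces the functions $g_{ij}(z)$ appearing in item~(2) to vanish identically, hence $\Pi=\sum_i \pp{}{q_i}\we\pp{}{p_i}$, and the $z_j$ are Casimirs of $\Pi$, in particular constant on every symplectic leaf. But the leaves of $\mathcal{F}$ are always contained in symplectic leaves, since for any $f\in\mathcal{F}$ the Hamiltonian $\X_f=\Pi^{\#}(df)$ takes values in the tangent space of a symplectic leaf. Consequently each $z_j$ is constant on the leaves of $\mathcal{F}$, i.e.\ $z_j\in\mathcal{F}$. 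Together with the $p_i$, the $n-r$ functions $(p_1,\dots,p_r,z_1,\dots,z_s)$ are independent, pairwise Poisson-commute, and all lie in $\mathcal{F}$, so they generate it.

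The main delicate point is the very first step, namely extracting split coordinates in which $\mathcal{F}$ is generated by coordinate functions $(\alpha_i,\gamma_j)$ and deducing from this the vanishing of the transverse Poisson tensor. Once this vanishing is in hand, the invariance of the rank propagates through Theorem~\ref{thm:equivariantnormalform1} and makes item~(4) essentially automatic; the remaining arguments are a clean combination of Haar-averaging and the previously established equivariant Carath\'eodory-Jacobi-Lie theorem.
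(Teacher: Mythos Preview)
Your argument is correct and is genuinely different from the paper's. The paper re-runs the induction that proves Theorem~\ref{thm:equivariantnormalform1}, inserting at each step a \emph{foliated} Frobenius theorem so that the auxiliary coordinates $(v_1,\dots,v_n)$ stay adapted to the foliation~$\mathcal{F}$. You instead exploit a structural consequence of the hypotheses: since the theorem (as stated) concerns a leaf of maximal rank $2r$, the split assumption forces the transverse foliation to be the point foliation, hence the transverse Poisson tensor vanishes and $\Pi$ has constant rank near~$m$. This lets you invoke Theorem~\ref{thm:equivariantnormalform1} as a black box on the averaged functions~$\bar p_i$, and then recover item~(4) a posteriori because the~$z_j$ are Casimirs. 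Your route is shorter and avoids the foliated Frobenius machinery; the paper's route is more hands-on but would adapt more readily to a version of the statement at genuinely singular leaves, where your constant-rank shortcut is unavailable.

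One small point deserves a sentence of justification: when you conclude that ``the leaves of~$\mathcal{F}$ are always contained in symplectic leaves, since \dots\ $\X_f$ takes values in the tangent space of a symplectic leaf'', you are implicitly using that $T_m\mathcal{F}$ is spanned by the Hamiltonian vector fields $\X_{f_i}(m)$. This is true here because $\Pi$ has constant rank~$2r$: the image $\Pi^{\#}_m\big(\mathrm{span}(df_i)\big)$ is isotropic in the $2r$-dimensional symplectic leaf (hence of dimension $\leqs r$) and has dimension $\geqs (n-r)-\dim\ker\Pi^{\#}_m=r$, so it equals $T_m\mathcal{F}$. With that remark your proof is complete.
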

\begin{proof}
The proof consists in repeating one by one the steps of the proof of theorem \ref{thm:equivariantnormalform1}
with the additional condition that all the systems of coordinates considered there must be split, i.e.
we want that the coordinates $(v_1,\dots,v_n)$ that appear in the proof of theorem \ref{thm:equivariantnormalform1}
be such that the integrable system is given by $ v_1$ and by $r-1$ of the functions $v_2, \dots,v_n $.
This can be done using a foliated version of the Frobenius Theorem that states that,
since ${\X}_{p_r}$ and ${\X}_{q_r}$ generate a foliation of dimension $2$ whose intersection with the foliation
defined by the integrable system is a foliation of rank $1$,
there exist local coordinates $v_1,\dots,v_{n}$, centered at $m$, such that
$$
  {\X}_{p_r}=\pp{}{v_{n-1}}\qquad\hbox{and}\qquad{\X}_{q_r}=\pp{}{v_n}.
$$
and such that the integrable system is generated by $v_1 $
and $r-1$ of the functions  $v_2, \dots,v_n $.
The rest of the proof follows then exactly the same lines.
\end{proof}

\bibliographystyle{amsplain}

\providecommand{\bysame}{\leavevmode\hbox to3em{\hrulefill}\thinspace}

\end{document}